\theoremstyle{plain}
\newtheorem{thm}{Theorem} 
\newtheorem{cor}[thm]{Corollary}
\newtheorem{prop}[thm]{Proposition}
\theoremstyle{definition}
\newtheorem{dfn}[thm]{Definition}
\newtheorem{ex}[thm]{Example}
\newtheorem{rem}[thm]{Remark}
 \newcommand{\NN}{\mathbb N}
 \def\gldim{\mathop{\rm gldim}}
 \def\pdim{\mathop{\rm pdim}}
 \def\Hom{\mathop{\rm Hom}}
 \def\rad{\mathop{\rm rad}}
\begin{document}

\title{Quivers without loops admit global dimension 2}


\author{Nicolas Poettering}
\address{Nicolas Poettering, Mathematisches Institut, Universität Bonn, Endenicher Allee 60, D-53115 Bonn, Germany}
\email{n.poettering@gmail.com}

\thanks{Supported by BIGS-Mathematics, Bonn and Mathematical Institute of the University Bonn}

\maketitle

\begin{abstract} Let $Q$ be a finite quiver without loops. Then there is an admissible ideal $I$ such that the algebra $kQ/I$ has global dimension at most two and is (strongly) quasi-hereditary. In addition some other (strongly) quasi-hereditary algebras $kQ/I'$ are constructed with bigger global dimension.
\end{abstract}

\section{Introduction}
 Let $Q$ be a finite quiver and $I$ an admissible ideal of the path algebra $kQ$. The global dimension $\gldim(kQ/I)$ of the finite-dimensional algebra $kQ/I$ is an important invariant of the category of finite-dimensional left $kQ/I$-modules. We are interested in the following question: Which natural numbers can actually occur as global dimensions of an algebra $kQ/I$ for some fixed finite quiver $Q$ with different admissible ideals $I$?

 It is well known that if $Q$ contains a loop, the global dimension of $kQ/I$ is infinite for any admissible ideal $I$. For a finite quiver $Q$ with at least one arrow an admissible ideal $I$ with $\gldim(kQ/I)=1$ exists if and only if $Q$ has no oriented cycles. In Corollary~\ref{cor-main} we answer this question for global dimension $2$. But we only consider small global dimensions since the global dimension of a finite-dimensional  algebra is neither bounded by a function depending on the number of simple modules (see~\cite{Green}) nor by a function on the Loewey length of the algebra (see~\cite{KK}).

\subsection{Main Result}
A quiver $R=(R_0,R_1,s_R,t_R)$ is a \textit{subquiver} of a quiver $Q=(Q_0,Q_1,s_Q,t_Q)$, if $R_0\subseteq Q_0$, $R_1\subseteq Q_1$, $s_R=s_Q|_{R_1}$ and $t_R=t_Q|_{R_1}$. For $m\in\NN_{>0}$ let $A_m$ be the quiver
\begin{align*}\vcenter{\begin{xy}\SelectTips{cm}{}
  \xymatrix@-0pc{
1\ar[r]&2\ar[r]&\ldots\ar[r]&m-1\ar[r]&m}
\end{xy}}\end{align*}
and $X_m$ the quiver
\begin{align*}\vcenter{\begin{xy}\SelectTips{cm}{}
  \xymatrix@-0pc{
1\ar[r]&2\ar[r]&\ldots\ar[r]&i\ar[r]&\ldots\ar[r]&m\ar@(ul,ur)[ll]}
\end{xy}}\end{align*} with some $1\leq i<m$.

\begin{samepage}
\begin{thm}\label{thm-main} Let $Q$ be a finite quiver without loops.
\begin{enumerate}
\item\label{part-n} Then there exists an admissible ideal $I$ such that $\gldim(kQ/I)\leq 2$.

\item\label{part-a} Let $k,m\in\NN$ with $2\leq k<m$. If $A_m$ is a subquiver of $Q$, then there exists an admissible ideal $I'$ such that $\gldim(kQ/I')=k$.

\item\label{part-x} Let $k,m\in\NN$ with $2\leq k\leq m$. If $X_m$ is a subquiver of $Q$, then there exists an admissible ideal $I'$ such that $\gldim(kQ/I')=k$.
\end{enumerate}
Moreover all these ideals are generated by zero relations of length at most three and the algebras are strongly quasi-hereditary.
\end{thm}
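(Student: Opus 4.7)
The plan is to construct the admissible ideals $I$ and $I'$ explicitly in all three parts, and to verify their properties by analyzing projective resolutions together with standard $\Delta$-filtrations of the projectives.

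For part (a), I would fix a linear order $\leq$ on the vertex set $Q_0$ and define $I$ combinatorially from this order: for each pair of composable arrows $\alpha\colon i\to j$ and $\beta\colon j\to \ell$, one puts $\beta\alpha$ into $I$ unless a specific minimality condition relative to $\leq$ and the combinatorics of $Q$ is satisfied; length-three monomial relations are added to deal with configurations arising from oriented cycles that no length-two rule can resolve. One then defines the standard module $\Delta_v$ as the largest quotient of $P_v$ whose composition factors are $S_u$ with $u\leq v$, and verifies by direct inspection of surviving paths that each projective $P_v$ admits a filtration by $\Delta_u$ with $u\geq v$. The remaining task is to show that $\rad\Delta_v$ is a direct sum of projectives for every $v$, from which the strong quasi-hereditary structure and the global-dimension bound $\gldim(kQ/I)\leq 2$ follow.

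For parts (b) and (c), the strategy is to modify $I$ along the fixed $A_m$- or $X_m$-subquiver so as to produce exactly one simple module with projective dimension $k$, while preserving the quasi-hereditary structure with respect to a compatible (re-)ordering. A natural choice is to impose only length-two relations along the subquiver (forcing a long staircase of projective covers) and to keep part-(a) relations elsewhere. The constraint $k<m$ in part (b) comes from the fact that the chain $A_m$ has only $m-1$ arrows and thus supports at most a length-$(m-1)$ staircase; the additional return arrow in $X_m$ extends this by one, yielding $k\leq m$ in part (c). All relations introduced this way remain of length at most three, as required by the moreover-clause.

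The main obstacle is the uniform construction of $I$ in part (a). The combinatorial rule determining which paths of length two and three are killed must simultaneously ensure the $\Delta$-filtration of each projective, the projectivity of $\rad\Delta_v$ for every $v$, and the admissibility of $I$, and it must do so for an arbitrary loopless $Q$ --- including quivers with many overlapping oriented cycles where the vertex order conflicts with the direction of arrows. The role of length-three relations is likely precisely to cut off higher extensions that cannot be eliminated by quadratic relations alone; engineering the correct combinatorial definition so that all of these conditions hold together is the technical heart of the argument.
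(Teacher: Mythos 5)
Your sketch captures the right \emph{shape} of the argument --- fix a linear order on $Q_0$, build a monomial ideal from the order, define standard modules $\Delta$, and read off a short projective resolution --- and your heuristic for $k<m$ versus $k\leq m$ in parts~\eqref{part-a} and~\eqref{part-x} matches the paper's staircase construction. But the heart of part~\eqref{part-n} is precisely the piece you leave unspecified (``a specific minimality condition relative to $\leq$ and the combinatorics of $Q$''), and several of the surrounding claims you make about it would not hold up. First, no length-three relations are needed in part~\eqref{part-n}: the paper's ideal $I$ in Equation~\eqref{eq-I} is generated purely by length-two zero relations, namely all paths $j\to k\to i$ through a \emph{local maximum} $k$ (i.e.\ $i<k$ and $j<k$). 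Oriented cycles, however badly overlapping, cause no trouble for this rule; the length-three relations in the paper appear only in Proposition~\ref{prop-x2}, to tune the global dimension of $kQ/I''$ down to $m-1$ in the $X_m$ case. Second, your verification target --- that $\rad\Delta_v$ be a direct sum of projectives --- is not what happens, and cannot be what happens: if both $R(i)$ and $\rad\Delta(i)$ were projective, the short exact sequences $0\to R(i)\to P(i)\to\Delta(i)\to 0$ and $0\to\rad\Delta(i)\to\Delta(i)\to S(i)\to 0$ would force $\pdim S(i)\leq 1$ and hence a hereditary algebra, contradicting the presence of relations. In the paper's construction $\rad P(i)$ decomposes as $\bigoplus_{j<i}P(j)^{r_{ij}}\oplus\bigoplus_{j>i}\Delta(j)^{r_{ij}}$, and the second summand has projective dimension at most $1$ because each $R(j)$ is projective; this is what gives $\pdim S(i)\leq 2$, as recorded in Sequence~\eqref{eq-proj-res}. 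Third, your convention that $\Delta_v$ has composition factors $S_u$ with $u\leq v$ is reversed relative to the strongly quasi-hereditary setup used here, where $\rad\Delta(i)$ has composition factors $S(j)$ only with $j>i$.

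In short: the overall strategy you describe is compatible with the paper, but the one step you flag as ``the technical heart'' --- the explicit combinatorial rule --- is exactly what is missing, and your proposed shortcut for verifying global dimension $2$ (projectivity of $\rad\Delta_v$) is false. To repair the sketch you would need to (i) replace the vague rule by the concrete ``no up-then-down'' rule above, (ii) verify $\rad P(i)\cong\bigoplus_{j<i}P(j)^{r_{ij}}\oplus\bigoplus_{j>i}\Delta(j)^{r_{ij}}$ and that each $\Delta(j)$ has projective cover $P(j)$ with projective kernel $R(j)$, and (iii) for parts~\eqref{part-a} and~\eqref{part-x} actually compute the staircase $\pdim\Gamma(i)=\pdim\Gamma(i+1)+1$ along $A_m$ (respectively $X_m$), including the distinction between $R(m)=0$ and $R(m)\neq 0$ that separates Propositions~\ref{prop-a}, \ref{prop-x1} and \ref{prop-x2}.
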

\end{samepage}

 The proof of this theorem is given in Section~\ref{sec-proof}. The following corollary is a direct consequence.

\begin{cor}\label{cor-main}  Let $Q$ be a finite quiver. Then there is an admissible ideal $I$ such that $\gldim(kQ/I)=2$ if and only if $Q$ has no loops and $(kQ^+)^2\neq0$.
\end{cor}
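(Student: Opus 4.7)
The plan is to derive the corollary directly from Theorem~\ref{thm-main} together with the two standard facts about global dimension recalled in the introduction.

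For the \emph{only if} direction, I would argue contrapositively by considering two obstructions. First, if $Q$ contains a loop, then $\gldim(kQ/I)=\infty$ for every admissible ideal $I$, as noted in the introduction, so $\gldim(kQ/I)=2$ is impossible. Second, if $(kQ^+)^2=0$, then admissibility forces $I\subseteq(kQ^+)^2=0$, hence $I=0$; the algebra $kQ$ itself has radical squared zero and no loops, so it is hereditary with $\gldim(kQ)\le1$, again ruling out global dimension $2$. These two cases together prove that $\gldim(kQ/I)=2$ for some admissible $I$ forces both "no loops" and $(kQ^+)^2\ne 0$.

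For the \emph{if} direction, the crucial translation is that $(kQ^+)^2\ne 0$ is equivalent to the existence of two composable arrows in $Q$, i.e.\ a path $1\to 2\to 3$, which is exactly saying that $A_3$ is a subquiver of $Q$ (with vertices chosen to be the endpoints of those two composable arrows; repeated vertices would force a loop, which is excluded). Applying part~\ref{part-a} of Theorem~\ref{thm-main} with $m=3$ and $k=2$ (the hypothesis $2\le k<m$ is satisfied) produces an admissible ideal $I'$ with $\gldim(kQ/I')=2$, finishing the proof.

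There is essentially no obstacle: the corollary is a direct packaging of Theorem~\ref{thm-main}\eqref{part-a} with the two well-known necessary conditions. The only point to state cleanly is the equivalence between $(kQ^+)^2\neq 0$ and $A_3\subseteq Q$ under the loop-free hypothesis.
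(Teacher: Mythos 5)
Your overall strategy is right, and the ``only if'' direction is essentially correct, but the ``if'' direction has a genuine gap. You claim that a path of length two in a loop-free quiver has three pairwise distinct endpoints, so that $A_3$ embeds as a subquiver. That is false: with $a\colon x\to y$ and $b\colon y\to x$ ($x\neq y$), the product $ba$ is a nonzero element of $(kQ^+)^2$, the quiver has no loop, and yet the three endpoints of the length-two path $x\to y\to x$ are not distinct, so $A_3$ is \emph{not} a subquiver of $Q$. (Loop-freeness only rules out the first two or last two endpoints coinciding, not the outer two.) In that situation Part~\eqref{part-a} of Theorem~\ref{thm-main} does not apply. The repair is easy and stays within the theorem: in the degenerate case $x\to y\to x$ one has $X_2$ as a subquiver of $Q$, and Part~\eqref{part-x} with $m=k=2$ (the bound $2\le k\le m$ is met) produces an admissible ideal with global dimension $2$. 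So the ``if'' direction should split into two cases according to whether the endpoints of the chosen length-two path are all distinct (use $A_3$, Part~\eqref{part-a}, $m=3$, $k=2$) or the outer two coincide (use $X_2$, Part~\eqref{part-x}, $m=k=2$). Alternatively, Part~\eqref{part-n} already gives some admissible ideal with $\gldim\le 2$, and one could argue separately that $\gldim\ge 2$ once $(kQ^+)^2\neq 0$; but the two-case argument via Parts~\eqref{part-a} and~\eqref{part-x} is the cleanest direct consequence of the theorem as stated.

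One smaller point in the ``only if'' direction: the stated reason for $\gldim(kQ)\le 1$ --- ``radical squared zero and no loops, so it is hereditary'' --- is not a valid general principle (a radical-square-zero quotient $kQ/I$ with loop-free $Q$ can easily have $\gldim\ge 2$, e.g.\ $1\to 2\to 3$ with the length-two path killed). What saves you is that here $I=0$, so the algebra is literally the path algebra $kQ$, and $(kQ^+)^2=0$ forces $Q$ to be acyclic; path algebras of finite acyclic quivers are hereditary. Phrase it that way and the argument is airtight.
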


\begin{ex}
Let $Q$ be the quiver
$$\vcenter{\begin{xy}\SelectTips{cm}{}
  \xymatrix@+1pc{
1\ar@/^/[rr]^a\ar@/^/[dr]^f&&2\ar@/^/[dl]^b\ar@/^/[ll]^d\\
&3\ar@/^/[ul]^c\ar@/^/[ur]^e
  }
\end{xy}}$$
$I$ the admissible ideal $\langle da,cf,ef,cb,eb\rangle$ and $I'=I+\langle ba\rangle.$ Then $\gldim(kQ/I)=2$, $\gldim(kQ/I')=3$ and the algebras $kQ/I$ and $kQ/I'$ are strongly quasi-hereditary.
\end{ex}

\subsection{Acknowledgement}
I thank Dieter Happel for asking this nice question. He obtained most of these results independently. Additionally I thank Martin Kalck for discussing and reading my results.

\section{Definitions}
\subsection{Path algebra}
 Let $k$ be a field. Let $Q=(Q_0,Q_1,s,t)$ be a \textit{finite quiver}, i.e.\ a finite oriented graph with vertex set $Q_0$, arrow set $Q_1$ and maps $s,t\colon Q_1\to Q_0$ indicating the start and terminal point of each arrow. An \textit{oriented path} $\rho=a_1\ldots a_n$ in $Q$ is the concatenation of some arrows $a_1,\ldots,a_n\in Q_1$ such that $t(a_{i+1})=s(a_i)$ for all $1\leq i<n$. Additionally we introduce a path $e_i$ of length zero for each vertex $i\in Q_0$. The \textit{path algebra} $kQ$ of a quiver $Q$ is the $k$-vector space with the set of oriented paths as a basis. The product of basis vectors is given by the concatenation of paths if possible or by zero otherwise. Let $kQ^+$ be the ideal in the path algebra $kQ$, which is generated by all arrows in $Q_1$. An two sided ideal $I$ of the path algebra $kQ$ is called \textit{admissible} if there is a $k\in\NN_{>0}$ with $(kQ^+)^k\subseteq I\subseteq (kQ^+)^2$. 

 Let $Q$ be a finite quiver and $I$ an admissible ideal. Then $kQ/I$ is a finite-dimensional $k$-algebra. The isomorphism classes of simple left $kQ/I$-modules are in a unique bijection with the vertices $Q_0$ of the quiver $Q$. So we denote the simple module associated to $i\in Q_0$ by $S(i)$ and the projective cover of $S(i)$ by $P(i)$.

\subsection{Global dimension}
 Let $Q$ be a finite quiver, $I$ an admissible ideal and $M$ a $kQ/I$-module. The \textit{projective dimension} $\pdim(M)$ of the module $M$ is the length of a minimal projective resolution. The \textit{global dimension} $\gldim(kQ/I)$ of the algebra $kQ/I$ is the supremum of the projective dimensions of all modules. These dimensions are in general not finite. But it is well known that the global dimension of $kQ/I$ is the maximum of the projective dimensions of all simple modules.

\subsection{Quiver of a module}
 Let $Q$ be a finite quiver, $I$ an admissible ideal and $M$ a $kQ/I$-module. Take some Jordan Hölder filtration $0=M_0\subseteq M_1\subseteq M_2\subseteq\ldots\subseteq M_t=M$ of $M$ with $M_i/M_{i-1}\cong S(j(i))$ and $j(i)\in Q_0$ for all $1\leq i\leq t$. Thus for each $1\leq i\leq t$ a vector $v_i\in M_i-M_{i-1}$ exists such that $e_{j(i)}v_i=v_i$. Now we associate a quiver to the module $M$:

The vertices are the basis vectors $v_1,\ldots,v_t$. Let $v_i$ and $v_j$ be two vectors and $a\in Q_1$ such that $\lambda_j\neq0$ in $a v_i=\sum_k\lambda_kv_k$. In this case there is one arrow called $a$ from $v_i$ to $v_j$. Note that in general this quiver depends on the chosen basis.

For the quiver associated to $M$ we write $j(i)$ instead of $v_i$ for all $1\leq i\leq t$.

\subsection{Quasi-hereditary algebra} The algebras constructed in the proof of Theorem~\ref{thm-main} are strongly quasi-he\-red\-itary in the sense of Ringel~\cite{Ringel-sqh}.
\begin{dfn} Let $Q$ be a finite quiver with $Q_0=\{1,2,\ldots,n\}$ and $I$ an admissible ideal. We say that $kQ/I$ is \textit{strongly quasi-hereditary} if there is for any $i\in Q_0$ an exact sequence
\begin{align}\label{gl-sqh}0 \to R(i) \to P(i) \to\Delta(i)\to 0\end{align}
with the following two properties:
\begin{enumerate}
\item $R(i)$ is a direct sum of projective modules $P(j)$ with $j<i$.
\item If $S(j)$ is a composition factor of $\rad(\Delta(i))$, then $j>i$.
\end{enumerate}
\end{dfn}

In~\cite{Ringel-sqh} Ringel proved that a strongly quasi-hereditary algebra $kQ/I$ with $Q_0=\{1,2,\ldots,n\}$ is also quasi-hereditary and has global dimension at most $n$.


 \subsection{Examples}

\begin{ex}
 Let $Q$ be the quiver with one vertex $1$ and one loop $a$ and let $I=\langle a^3\rangle$. Then the quiver associated to  $S(1)$ and $P(1)$ are given by
$$1\qquad 1\stackrel a\to 1\stackrel a\to 1.$$
In this case $\pdim S(1)=\infty$, $\pdim P(1)=0$ and $\gldim(kQ/I)=\infty$. Thus the algebra $kQ/I$ is not strongly quasi-hereditary.
\end{ex}

\begin{ex}
Let $n\in\NN_{>0}$, $Q$ the quiver $1\stackrel{a_1}\longrightarrow2\stackrel{a_2}\longrightarrow\ldots\stackrel{a_{n-1}}\longrightarrow n$ and $I=\langle a_{i+1}a_i|1\leq i\leq n-2\rangle.$ Then the quivers of $S(i)$ and $P(j)$ with $i,j\in Q_0$ and $j\neq n$ are given by
$$i\qquad j\stackrel {a_j}\to j+1.$$
Then $\pdim S(i)=n-i$, $\gldim(kQ/I)=n-1$ and $kQ/I$ is strongly quasi-hereditary.
\end{ex}

\section{Proof of Theorem~\ref{thm-main}}\label{sec-proof}
The proof of the main theorem is divided into five propositions. The following relabeling of the vertices and arrows of $Q$ is possible for any finite quiver without loops.
Let $n,r_{ij}\in\NN$ and $Q$ be the quiver with \begin{align*}
Q_0&=\{1,\ldots,n\},\\
Q_1&=\{a_{ijt}\colon i\to j|i,j\in Q_0,t\in\NN,i\neq j,1\leq
t\leq r_{ij}\}.\end{align*} Let $I$ be the ideal
\begin{align}I=\langle a_{kit}a_{jku}|i,j,k\in Q_0,t,u\in\NN,i<k,j<k,1\leq t\leq r_{ki},1\leq u\leq r_{jk}\rangle.\label{eq-I} \end{align}
This ideal is admissible, since $(kQ^+)^{2n-1}\subseteq I$.

\begin{prop}\label{prop-n} Let $Q$ be a finite quiver without loops and $I$ the ideal defined in Equation~\eqref{eq-I}. Then $\gldim(kQ/I)\leq 2$.
\end{prop}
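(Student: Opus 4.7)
The plan is to show $\pdim S(i)\le 2$ for every vertex $i$, which implies $\gldim(kQ/I)\le 2$ since the global dimension equals the supremum of the projective dimensions of the simples. The projective cover $P(i)\twoheadrightarrow S(i)$ has kernel $\rad P(i)$, and because $I$ is generated in degree two, the top of $\rad P(i)$ is $\bigoplus_{j,t}S(j)$ with one copy for each arrow $a_{ijt}$ leaving $i$. Hence the projective cover of $\rad P(i)$ is $\bigoplus_{j,t}P(j)$, with the generator of the $(j,t)$-summand mapping to $a_{ijt}\in P(i)$.

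Next I would identify the second syzygy $\Omega^2 S(i)$, i.e.\ the kernel of this cover. Since $I$ is monomial, every basis path in $P(i)$ starts with a uniquely determined first arrow $a_{ijt}$, and an element $(q_{j,t})$ lies in the kernel iff each $q_{j,t}\cdot a_{ijt}$ vanishes separately, because distinct $(j,t)$ produce basis paths with distinct first arrows and so cannot cancel against one another. For a fixed $(j,t)$, a basis path $\rho\in P(j)$ with first arrow $a_{jku}$ satisfies $\rho\cdot a_{ijt}=0$ precisely when the new length-two subpath $a_{jku}\cdot a_{ijt}$ is a generator of $I$, i.e.\ when $j>i$ and $k<j$. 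Consequently
\[\Omega^2 S(i)\ =\ \sum_{\substack{j>i,\ 1\le t\le r_{ij}\\ k<j,\ 1\le u\le r_{jk}}}(kQ/I)\cdot a_{jku},\]
where each $a_{jku}$ sits inside the $(j,t)$-summand of $\bigoplus_{j,t}P(j)$.

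The crux is to show that this sum is actually a direct sum of modules each isomorphic to $P(k)$, so that $\Omega^2 S(i)$ is projective and $\pdim S(i)\le 2$. For each individual right-multiplication $P(k)\to P(j),\ q\mapsto q\cdot a_{jku}$, a nonzero basis path $q$ whose first arrow is $a_{kmv}$ remains nonzero in $P(j)$: the only new length-two subpath produced is $j\to k\to m$, whose middle vertex $k$ cannot satisfy $k>j$ (since $k<j$ by hypothesis) and hence is not a generator of $I$. Directness of the sum over $(k,u)$ is automatic, because the basis paths in $(kQ/I)\cdot a_{jku}$ all begin with the arrow $a_{jku}$, and distinct $(k,u)$ give distinct first arrows; directness across different $(j,t)$ is free since they lie in different direct summands of $\bigoplus P(j)$.

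I expect the main obstacle to be the third paragraph: one has to use simultaneously that $I$ is monomial (to reduce the analysis of relations to the single new length-two subpath appearing at each junction) and the asymmetric form of the defining condition ``middle vertex strictly larger than both endpoints'' (to ensure that right-multiplying a basis path of $P(k)$ by $a_{jku}$ with $k<j$ can never create a forbidden subpath). Everything else is bookkeeping with the monomial basis of $kQ/I$.
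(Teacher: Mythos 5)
Your argument is correct and follows essentially the same route as the paper: you reconstruct, via the monomial basis of $kQ/I$, exactly the length-two projective resolution $0\to \bigoplus_{j>i}\bigoplus_{k<j}P(k)^{r_{ij}r_{jk}}\to\bigoplus_{j\neq i}P(j)^{r_{ij}}\to P(i)\to S(i)\to 0$ that the paper asserts (Sequence~\eqref{eq-proj-res}), merely supplying the verification the paper leaves to the figures. In particular your identification of the second syzygy as a direct sum of modules $(kQ/I)a_{jku}\cong P(k)$, using that the middle vertex $k<j$ can never create a forbidden subpath, is precisely why the paper's resolution is exact, so the two proofs coincide in substance.
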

\begin{figure}[ht]
$$\vcenter{\begin{xy}\SelectTips{cm}{}
  \xymatrix@-1.3pc{
&&1\ar[dl]\ar[d]\ar[dr]\\
&2\ar[dl]\ar[d]&3\ar[d]&4\\
3\ar[d]&4&4\\
4}
\end{xy}}\quad
\vcenter{\begin{xy}\SelectTips{cm}{}
  \xymatrix@-1.3pc{
&2\ar[dl]\ar[d]\ar[dr]\\
P(1)&3\ar[d]&4\\
&4}
\end{xy}}\quad
\vcenter{\begin{xy}\SelectTips{cm}{}
  \xymatrix@-1.3pc{
&3\ar[dl]\ar[d]\ar[dr]\\
P(1)&P(2)&4}
\end{xy}}\quad
\vcenter{\begin{xy}\SelectTips{cm}{}
  \xymatrix@-1.3pc{
&4\ar[dl]\ar[d]\ar[dr]\\
P(1)&P(2)&P(3)}
\end{xy}}$$
\caption{The projective modules $P(i)$ with $i\in\{1,2,3,4\}$ for $n=4$ and $r_{ij}=1$.}
\label{fig-n4}
\end{figure}
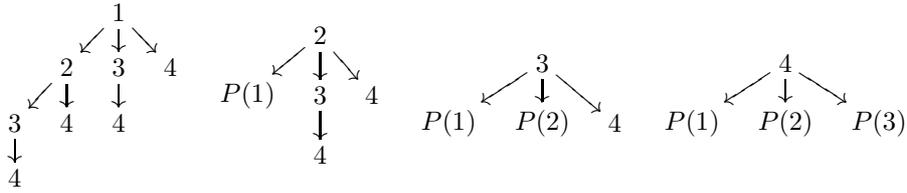

\begin{proof}Figure~\ref{fig-n4} shows the quivers of the indecomposable projective modules for $n=4$ and $r_{ij}=1$ for all $i,j\in Q_0$ with $i\neq j$. Using these pictures we get in general for each $i\in Q_0$ the following minimal projective resolution:
\begin{align}\label{eq-proj-res}
0\to \bigoplus_{j\in Q_0,j>i}\bigoplus_{k\in
Q_0,k<j}P(k)^{r_{ij}r_{jk}}\to \bigoplus_{j\in
Q_0-\{i\}}P(j)^{r_{ij}}\to P(i)\to&S(i)\to 0.\end{align} This means 
$\gldim(kQ/I)\leq2$.
\end{proof}

 Let $A_m$ be a subquiver of $Q$. Then $A_k$ with $k\leq m$ is also a subquiver of $Q$. If $A_m$ can be extended to some $X_m$ in $Q$ (i.e.\ there is a $a\in Q_1$ with $s(a)=m$ and $t(a)\in (A_m)_0$), then it is enough to study the case that $X_m$ is a subquiver of $Q$ (see Proposition~\ref{prop-x2}). Otherwise Part~\eqref{part-a} of Theorem~\ref{thm-main} is proven in the following proposition.

\begin{prop}\label{prop-a} Let $Q$ be a finite quiver without loops and $m\in\NN$ with $m\geq 3$. If $A_m$ is a subquiver of $Q$, which cannot be extended to some $X_m$ in $Q$, then there exists an admissible ideal $I'$ such that $\gldim(kQ/I')=m-1$.
\end{prop}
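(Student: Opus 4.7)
My plan is to relabel the vertices of $Q$ and then augment the ideal $I$ from \eqref{eq-I} by the length-$2$ compositions along $A_m$. Relabel so that the $A_m$ subquiver occupies the labels $\{1,\ldots,m\}$ with arrows $b_i\colon i\to i+1$, and the remaining vertices take labels $m+1,\ldots,n$ arbitrarily. Under this labeling, the hypothesis ``$A_m$ cannot be extended to $X_m$'' says that vertex $m$ has no arrows into $\{1,\ldots,m-1\}$. With $I$ as in \eqref{eq-I}, I set
\[
I' := I + \bigl\langle b_{i+1}b_i \;:\; 1 \leq i \leq m-2 \bigr\rangle,
\]
which is admissible because the new generators lie in $(kQ^+)^2$ and $I$ already contains a power of $kQ^+$.

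First I would verify that $kQ/I'$ is strongly quasi-hereditary with respect to the ordering $1<2<\cdots<n$, in parallel with the proof of Proposition~\ref{prop-n}. For each arrow $i\to j$ with $j<i$, the submodule of $P(i)$ generated by its image is still isomorphic to $P(j)$ in $kQ/I'$: the $I$-relations miss it because the middle vertex $j$ is not a peak, and the added $A_m$-relations, being length-$2$ compositions of upward $A_m$-arrows, cannot appear in a composition starting with a downward arrow $i\to j$. Hence $R(i):=\bigoplus_{j<i}P(j)^{r_{ij}}$ injects into $P(i)$, and $\Delta(i):=P(i)/R(i)$ has composition factors of labels $\geq i$. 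The key consequence of the non-extendability hypothesis is that $R(m)=0$, so $P(m)=\Delta(m)$ and $\rad P(m)=\bigoplus_{j>m,\,m\to j}\Delta(j)^{r_{mj}}$ has projective dimension at most~$1$.

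The main computation is to build a minimal projective resolution of $S(1)$ of length $m-1$. The $A_m$-arrow $b_i$ contributes $S(i+1)$ to the top of the radical of $P(i)$; by the $I$- and $I'$-relations, $b_i$ cannot be extended along $A_m$ (the composition $b_{i+1}b_i$ is killed by $I'$, while compositions $i\to i+1\to k$ with $k\leq i$ are killed by $I$). Tracking both the $A_m$-backbone $P(1)\to P(2)\to\cdots\to P(m)$ of the resolution and the side contributions from other arrows, I would show by induction on $k=1,\ldots,m-1$ that the $k$-th syzygy of $S(1)$ sits inside $P(k+1)$ together with some projective ``dead-end'' summands that do not propagate to the next syzygy. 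At step $m-1$, the backbone arrives at $P(m)$ and its kernel is projective (using $R(m)=0$ and $\pdim\Delta(j)\leq 1$ for $j>m$), terminating the resolution and giving $\pdim S(1)=m-1$.

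For the upper bound $\gldim(kQ/I')\leq m-1$: for $i>m$ the resolution of $S(i)$ behaves essentially as in Proposition~\ref{prop-n} (length $\leq 2$), because the added $A_m$-relations only truncate paths that would pass through a full $A_m$-composition; for $i\in A_m$, the same induction starting at $i$ yields $\pdim S(i)\leq m-i\leq m-1$. The main obstacle throughout is the bookkeeping of the side contributions coming from arrows out of $A_m$-vertices to non-$A_m$ vertices (and from arrows $i\to k$ with $k$ much larger than $i$): one must verify that these contribute only projective summands at each stage of the resolution, so that they neither lengthen the $A_m$-backbone beyond $m-1$ nor truncate it prematurely. This is precisely where the careful structure of $I$ from Proposition~\ref{prop-n} and the non-extendability hypothesis work together.
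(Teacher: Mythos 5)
Your strategy is essentially the paper's: relabel so $A_m$ sits on $\{1,\ldots,m\}$, enlarge $I$ by length-two compositions along $A_m$, and exploit $R(m)=0$ to terminate the backbone at length $m-1$. However two details diverge from the paper and, as stated, leave a gap. First, your $I'$ only kills the compositions $b_{i+1}b_i$ of the \emph{chosen} $A_m$-arrows, whereas the paper kills \emph{all} compositions $a_{i+1,i+2,t}a_{i,i+1,u}$ over every pair of parallel arrows $i\to i+1\to i+2$. With your smaller ideal, $\rad P(i)$ no longer decomposes uniformly: among the $r_{i,i+1}$ arrows $i\to i+1$, the chosen $b_i$ generates a truncated (cokernel-type) submodule while the other parallel arrows generate $\Delta(i+1)$-type submodules, so the clean recursion on which the length-$(m-1)$ chain rests does not set up in the uniform way your induction presupposes. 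The paper's stronger relation makes every arrow $i\to i+1$ contribute the same piece $\Gamma(i+1)$, which is what lets one write the single two-term recursion $\pdim\Gamma(i)=\pdim\Gamma(i+1)+1$.

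Second, your inductive claim that the side summands at each syzygy are ``projective dead-end summands that do not propagate to the next syzygy'' is not correct as stated. In the paper's resolution the side contributions at each stage are of the form $\Delta(j)$ (for $j\geq i+2$), and these are in general \emph{not} projective: they have $\pdim\Delta(j)\leq 1$, coming from $0\to R(j)\to P(j)\to\Delta(j)\to 0$. They do propagate one more syzygy before dying. This does not change the outcome, because the $\Gamma$-chain dominates, but the imprecision obscures exactly the point you need to make to conclude $\pdim S(i)=\pdim\Gamma(i+1)+1$. The paper captures this via the pair of sequences~\eqref{eq-si-aufl} and~\eqref{eq-gi-aufl} together with the explicit terminating resolution~\eqref{eq-gamma-m-1}, in which $R(m)=0$ is invoked precisely to get $\pdim\Gamma(m-1)=1$; your sketch appeals to $R(m)=0$ for the same purpose but never isolates the analogue of $\Gamma(m-1)$ whose projective dimension it controls. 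To close the gap: use the paper's (larger) $I'$, define $\Gamma(i)$ as the cokernel of $R(i)\oplus P(i+1)^{r_{i,i+1}}\to P(i)$, prove the two exact sequences, and observe that $R(m)=0$ makes the tail of the $\Gamma$-resolution split.
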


\begin{proof} Let $m\in\NN$ with $m\geq3$ and $A_m$ a subquiver of $Q$. Thus we have $r_{i,i+1}>0$ for all $1\leq i<m$. We use the ideal $I$ defined in~\eqref{eq-I} and Proposition~\ref{prop-n}. Let \begin{align*}I'=I+\langle a_{i+1,i+2,t}a_{i,i+1,u}|&i\in Q_0,t,u\in\NN,i\leq m-2,\\&1\leq t\leq r_{i+1,i+2},1\leq u\leq r_{i,i+1}\rangle.\end{align*} For $i\in Q_0$ define $R(i)=\bigoplus_{j\in Q_0,j<i}P(j)^{r_{ij}}$ and $\Delta(i)=P(i)/R(i)$ using that $R(i)$ is a direct summand of $\rad P(i)$. For $i\in Q_0$ with $i<m$ let $\Gamma(i)$ be the cokernel of some map $R(i)\oplus P(i+1)^{r_{i,i+1}}\to P(i)$ such that $\Gamma(i)$ is minimal with this property. Thus $\Gamma(i)$ is unique and the top of $\Gamma(i)$ is isomorphic to $S(i)$.

Again we give the quivers of indecomposable projective modules for $n=5$, $m=4$ and $r_{ij}=1$ for all $i,j\in Q_0$ with $i\neq j$ in Figure~\ref{fig-n5}. Figure~\ref{fig-n5-D} and \ref{fig-n5-G} shows the quivers of the  modules $\Delta(i)$ and the $\Gamma(i)$ in this case.
\begin{figure}[th]
$$\vcenter{\begin{xy}\SelectTips{cm}{}
  \xymatrix@-1.3pc{
&&&1\ar[dll]\ar[dl]\ar[dr]\ar[drr]\\
&2\ar[dl]\ar[d]&3\ar[d]\ar[dr]&&4\ar[d]&5\\
4\ar[d]&5&4\ar[d]&5&5\\
5&&5}
\end{xy}}\qquad
\vcenter{\begin{xy}\SelectTips{cm}{}
  \xymatrix@-1.3pc{
&2\ar[dl]\ar[d]\ar[dr]\ar[drr]\\
P(1)&3\ar[d]&4\ar[d]&5\\
&5&5}
\end{xy}}\qquad
\vcenter{\begin{xy}\SelectTips{cm}{}
  \xymatrix@-1.3pc{
&3\ar[dl]\ar[d]\ar[dr]\ar[drr]\\
P(1)&P(2)&4\ar[d]&5\\
&&5}
\end{xy}}$$
$$\vcenter{\begin{xy}\SelectTips{cm}{}
  \xymatrix@-1.3pc{
&4\ar[dl]\ar[d]\ar[dr]\ar[drr]\\
P(1)&P(2)&P(3)&5}
\end{xy}}\qquad
\vcenter{\begin{xy}\SelectTips{cm}{}
  \xymatrix@-1.3pc{
&5\ar[dl]\ar[d]\ar[dr]\ar[drr]\\
P(1)&P(2)&P(3)&P(4)}
\end{xy}}
$$
\caption{The projective modules $P(i)$ with $i\in\{1,2,3,4,5\}$ for $n=5$, $m=4$ and $r_{ij}=1$.}
\label{fig-n5}
\end{figure}
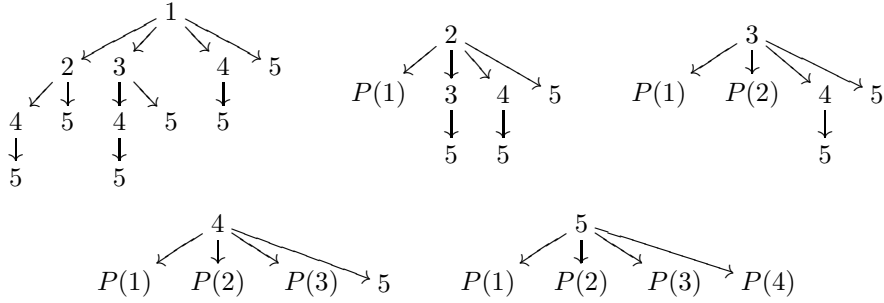
\begin{figure}[th]
$$P(1)\qquad
\vcenter{\begin{xy}\SelectTips{cm}{}
  \xymatrix@-1.3pc{
&2\ar[dl]\ar[d]\ar[dr]\\
3\ar[d]&4\ar[d]&5\\
5&5}
\end{xy}}\qquad
\vcenter{\begin{xy}\SelectTips{cm}{}
  \xymatrix@-1.3pc{
3\ar[d]\ar[dr]\\
4\ar[d]&5\\
5}
\end{xy}}\quad
\vcenter{\begin{xy}\SelectTips{cm}{}
  \xymatrix@-1.3pc{
4\ar[d]\\
5}
\end{xy}}\qquad
\vcenter{\begin{xy}\SelectTips{cm}{}
  \xymatrix@-1.3pc{
5}
\end{xy}}
$$
\caption{The modules $\Delta(i)$ with $i\in\{1,2,3,4,5\}$ for $n=5$, $m=4$ and $r_{ij}=1$.}
\label{fig-n5-D}
\end{figure}
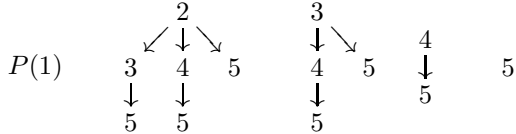
\begin{figure}[th]
$$ \vcenter{\begin{xy}\SelectTips{cm}{}
  \xymatrix@-1.3pc{
&&1\ar[dl]\ar[d]\ar[dr]\\
&3\ar[dl]\ar[d]&4\ar[d]&5\\
4\ar[d]&5&5\\
5}
\end{xy}}\qquad
\vcenter{\begin{xy}\SelectTips{cm}{}
  \xymatrix@-1.3pc{
2\ar[d]\ar[dr]\\
4\ar[d]&5\\
5}
\end{xy}}\qquad
\vcenter{\begin{xy}\SelectTips{cm}{}
  \xymatrix@-1.3pc{
3\ar[d]\\
5}
\end{xy}}
$$
 \caption{The modules $\Gamma(i)$ with $i\in\{1,2,3\}$ for $n=5$, $m=4$ and $r_{ij}=1$.}
 \label{fig-n5-G}
 \end{figure}
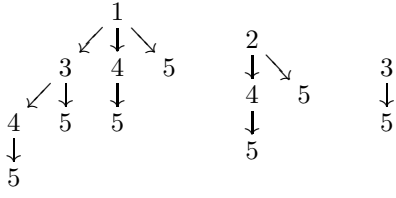

Since Sequence~\eqref{eq-proj-res} is exact for all $i\in Q_0$ with $i\geq m-1$ we get $\pdim(S(i))\in\{0,1,2\}$ in this case. With Figure~\ref{fig-n5}, \ref{fig-n5-D} and~\ref{fig-n5-G} we get in general for each $i\in Q_0$ with $i\leq m-2$ an exact sequence
 \begin{align}0\to R(i)\oplus\Gamma(i+1)^{r_{i,i+1}}\oplus\bigoplus_{j\in Q_0,j\geq i+2}\Delta(j)^{r_{ij}}\to P(i)\to S(i)\to0.\label{eq-si-aufl}
 \end{align}
 Using $\pdim(\Gamma(i+1))\geq 1$ for all $i\in Q_0$ with $i\leq m-2$ we get $\pdim(S(i))=\pdim(\Gamma(i+1))+1$.  Since in this case the sequence
\begin{align}\label{eq-gi-aufl}
0\to R(i)\oplus\Gamma(i+1)^{r_{i,i+1}}\to P(i)\to \Gamma(i)\to0\end{align} is exact we know 
$\pdim(\Gamma(i))=\pdim(\Gamma(i+1))+1$. Thus for $i\in Q_0$ with $i\leq m-2$
\begin{align*}
\pdim(S(i))=\pdim(\Gamma(i+1))+1=\pdim(\Gamma(m-1))+m-1-i.\end{align*}
 Therefore $\pdim(S(i))\leq\pdim(S(1))$ for all $i\in Q_0$ and $$\gldim(kQ/I')=\pdim(\Gamma(m-1))+m-2.$$
It is not hard to compute the minimal projective resolution of $\Gamma(m-1)$:
\begin{align}
\label{eq-gamma-m-1}0\to R(m)^{r_{m-1,m}}\to R(m-1)\oplus P(m)^{r_{m-1,m}}\to P(m-1)\to \Gamma(m-1)\to0.
\end{align}

 Now we assume that $A_m$ cannot be extended to some $X_m$ in $Q$. Thus $R(m)=0$. Then Sequence~\eqref{eq-gamma-m-1} provides $\pdim(\Gamma(m-1))=1$ and $\gldim(kQ/I')=m-1$.
\end{proof}

\begin{rem}
The proof of Proposition~\ref{prop-a} holds up to Sequence~\eqref{eq-gamma-m-1} for general subquivers $A_m$. Hence we can use it for $X_m$, too.
\end{rem}

Let $X_m$ be a subquiver of $Q$ and $i\in(X_m)_0$ with $r_{mi}\neq0$. So we can assume that $i$ is chosen maximal for fixed vertices $(X_m)_0=\{1,\ldots,m\}$. Thus (by relabeling the vertices) $X_{m-j}$ is also a subquiver of $Q$ for all $0\leq j<i$ and $A_{m-i}$ as well. Using Proposition~\ref{prop-a} the following two propositions yield Part~\eqref{part-x} of Theorem~\ref{thm-main}.

\begin{prop}\label{prop-x1} Let $Q$ be a finite quiver without loops and $m\in\NN$ with $m\geq 2$. If $X_m$ is a subquiver of $Q$, then there exists an admissible ideal $I'$ such that $\gldim(kQ/I')=m$.
\end{prop}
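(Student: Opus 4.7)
The plan is to reuse the ideal $I'$ constructed in Proposition~\ref{prop-a}. By the Remark following that proof, its argument applies to the $A_m$ sitting inside $X_m$ up to and including Sequence~\eqref{eq-gamma-m-1}. So for $m\geq 3$ one still has the identity
$$\gldim(kQ/I')=\pdim(\Gamma(m-1))+m-2$$
together with the four-term sequence
$$0\to R(m)^{r_{m-1,m}}\to R(m-1)\oplus P(m)^{r_{m-1,m}}\to P(m-1)\to \Gamma(m-1)\to 0,$$
and the whole problem reduces to showing $\pdim(\Gamma(m-1))=2$.

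The $X_m$ hypothesis contributes precisely the extra arrow $m\to i$ with some $1\leq i<m$, so $r_{mi}>0$ and hence $R(m)=\bigoplus_{j<m}P(j)^{r_{mj}}$ is nonzero; combined with $r_{m-1,m}>0$ this makes the left-most term $R(m)^{r_{m-1,m}}$ of the displayed resolution nonzero. I would then verify that the four-term sequence is in fact a \emph{minimal} projective resolution of $\Gamma(m-1)$. The cover map $P(m-1)\twoheadrightarrow\Gamma(m-1)$ is minimal because $\Gamma(m-1)$ has top $S(m-1)$; the middle map lands in $\rad P(m-1)$ by the same reasoning; and the crucial point is that the inclusion $R(m)^{r_{m-1,m}}\hookrightarrow P(m)^{r_{m-1,m}}$ factors through the radical, while no summand of $R(m)^{r_{m-1,m}}$ is cancelled inside the middle term $R(m-1)\oplus P(m)^{r_{m-1,m}}$. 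Here the extra relations $a_{i+1,i+2}a_{i,i+1}$ generating the new part of $I'$ are what keep the tops of the relevant syzygy summands disjoint. Once minimality is granted, $R(m)^{r_{m-1,m}}\neq 0$ yields $\pdim(\Gamma(m-1))=2$ and therefore $\gldim(kQ/I')=m$.

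The base case $m=2$ is separate: the index condition $i\leq m-2=0$ in the definition of $I'$ is vacuous, so $I'=I$ and Proposition~\ref{prop-n} already gives $\gldim(kQ/I)\leq 2$. Since $X_2$ is the oriented $2$-cycle $1\to 2\to 1$, the quiver $Q$ contains an oriented cycle, so $kQ/I$ cannot be hereditary, and hence $\gldim\geq 2$; this gives equality. The main obstacle in the plan is the minimality check for the resolution of $\Gamma(m-1)$: everything else is inherited from Proposition~\ref{prop-a}, whereas ruling out a spurious projective cancellation on the left end requires explicit attention to which composition factors sit in the top of the first syzygy.
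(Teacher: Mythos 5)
Your proposal is correct and follows essentially the same route as the paper: reuse the ideal $I'$ of Proposition~\ref{prop-a}, invoke the Remark to keep the identity $\gldim(kQ/I')=\pdim(\Gamma(m-1))+m-2$ and Sequence~\eqref{eq-gamma-m-1}, and observe that the extra arrow out of $m$ forces $R(m)\neq0$, hence $\pdim(\Gamma(m-1))=2$ and $\gldim(kQ/I')=m$. Your explicit minimality check for the resolution of $\Gamma(m-1)$ and your separate treatment of $m=2$ (where $I'=I$) only make explicit details the paper leaves implicit.
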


\begin{proof} Using the proof of Proposition~\ref{prop-a}, especially the notation, $R(m)\neq0$. Then in this case Sequence~\eqref{eq-gamma-m-1} yields $\pdim(\Gamma(m-1))=2$ and $\gldim(kQ/I')=m$.
\end{proof}

\begin{prop}\label{prop-x2}Let $Q$ be a finite quiver without loops and $m\in\NN$ with $m\geq 4$. If $X_m$ is a subquiver of $Q$, then there exists an admissible ideal $I''$ such that $\gldim(kQ/I'')=m-1$.
\end{prop}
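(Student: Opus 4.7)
The plan is to take the admissible ideal
\begin{align*}
I'' = I + \langle a_{j+1,j+2,t}a_{j,j+1,u}|&j \in Q_0, t, u \in \NN, 2 \leq j \leq m-2,\\
&1 \leq t \leq r_{j+1,j+2}, 1 \leq u \leq r_{j,j+1} \rangle,
\end{align*}
i.e.\ the ideal $I'$ of Propositions~\ref{prop-a} and~\ref{prop-x1} with the single family of chain relations at $j=1$ (the length-two paths of the form $a_{23,t}a_{12,u}$) omitted. Since $I \subseteq I'' \subseteq I'$, the ideal $I''$ is admissible and generated by zero relations of length two.

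For $i \in \{2, \ldots, m-1\}$, all the chain relations needed in the analyses of Propositions~\ref{prop-a} and~\ref{prop-x1} at these vertices still lie in $I''$, so the exact sequences \eqref{eq-si-aufl}, \eqref{eq-gi-aufl} and \eqref{eq-gamma-m-1} continue to hold at the appropriate indices. The hypothesis that $X_m$ is a subquiver of $Q$ forces $R(m) \neq 0$, so \eqref{eq-gamma-m-1} gives $\pdim \Gamma(m-1) = 2$, and the recursion $\pdim \Gamma(i) = \pdim \Gamma(i+1) + 1$ coming from \eqref{eq-gi-aufl} then yields $\pdim S(i) = m - i + 1$ for $2 \leq i \leq m - 2$, maximised by $\pdim S(2) = m - 1$.

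For $i = 1$ and $i \geq m - 1$, I will argue that the projective resolution~\eqref{eq-proj-res} of Proposition~\ref{prop-n} remains exact in $kQ/I''$, so that $\pdim S(i) \leq 2$. The essential observation is that the additional relations of $I''$ are length-two paths $j \to (j+1) \to (j+2)$ with $2 \leq j \leq m-2$; such a path can appear as a \emph{junction} $i \to j' \to k$ in the kernel computation of~\eqref{eq-proj-res} only when $i = j \in \{2, \ldots, m-2\}$, so for the remaining values of $i$ the kernel of the middle map is unchanged from its value in $kQ/I$, namely the direct sum of projectives $\bigoplus_{j > i, k < j} P(k)^{r_{ij}r_{jk}}$. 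Combining the two steps yields $\gldim(kQ/I'') = m - 1$. Strong quasi-heredity is verified along the same lines as in Propositions~\ref{prop-a} and~\ref{prop-x1}: $R(i)$ embeds into $P(i)$ by the junction argument, and any composition factor $S(\ell)$ of $\rad(\Delta(i))$ with $\ell \leq i$ would correspond to a surviving path $i \to j_1 \to \cdots \to j_r = \ell$ beginning with $j_1 > i$, which must exhibit a strict local maximum somewhere, contradicting the definition of $I$. The main obstacle lies in the kernel analysis at $i = 1$, where one must confirm that the additional chain relations (all starting at a vertex $j \geq 2$) introduce no new syzygies in the projective presentation of $\rad(P(1))$.
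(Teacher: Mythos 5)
Your construction is correct, but it takes a genuinely different route from the paper's. The paper keeps the quadratic chain relations only for $i\le m-4$ and instead adjoins the cubic relations $a_{m-1,m,s}a_{m-2,m-1,t}a_{m-3,m-2,u}$; it then computes $\pdim\Gamma(m-3)=\pdim S(m-3)=3$ directly (using $R(m)\neq0$ and $\pdim\Delta(j)\le1$) and runs the recursion $\pdim S(i)=\pdim\Gamma(i+1)+1$ down to $i=1$, so the maximum $m-1$ is attained at $S(1)$. You instead shorten the chain at the bottom: keeping all quadratic chain relations except those at $j=1$, the recursion starting from $\pdim\Gamma(m-1)=2$ (again from $R(m)\neq0$ via Sequence~\eqref{eq-gamma-m-1}) stops at $i=2$, the maximum $m-1$ is attained at $S(2)$, and $S(1)$ falls out of the chain because $\rad P(1)\cong\bigoplus_{j>1}\Delta(j)^{r_{1j}}$ once the relations $a_{23,t}a_{12,u}$ are absent, giving $\pdim S(1)\le2$. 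Your ideal is purely quadratic, which is slightly tidier than the paper's mixed quadratic/cubic ideal (and still satisfies the ``length at most three'' clause of the theorem); the paper's variant has the advantage that the recursion of Proposition~\ref{prop-a} is reused verbatim for all $i\le m-4$ with the modification localized at the top of $A_m$. One step you leave implicit and should spell out: to conclude that the second syzygy of $S(i)$ for $i=1$ and $i\ge m-1$ is projective, you need not only your junction observation (no generator of $I''$ ends in an arrow with source $i$ except the local-maximum relations of $I$, so the kernel of $P(j)\to P(i)$ is spanned by the paths beginning with a descending arrow), but also that for $k<j$ multiplication by a descending arrow gives an injective map $P(k)\to P(j)$ over $kQ/I''$ --- which holds because no generator of $I''$ has a descending rightmost arrow; with this remark the kernel is indeed $\bigoplus_{j>i}\bigoplus_{k<j}P(k)^{r_{ij}r_{jk}}$, and the strong quasi-heredity check is then identical to the paper's final proposition.
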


\begin{proof}
 This proof is done very similar to the proof of Proposition~\ref{prop-a}. Let \begin{align*}
I''=I+\langle a_{i+1,i+2,t}a_{i,i+1,u}|
&i\in Q_0,t,u\in\NN,i\leq m-4,\\
&1\leq t\leq r_{i+1,i+2},1\leq u\leq r_{i,i+1}\rangle\\
+\langle a_{m-1,m,s}a_{m-2,m-1,t}a_{m-3,m-2,u}|
&s,t,u\in\NN,1\leq s\leq r_{m-1,m},\\
&1\leq t\leq r_{m-2,m-1},1\leq u\leq r_{m-3,m-2}\rangle.
\end{align*} 
We define $R(i)$, $\Delta(i)$ and $\Gamma(i)$ as in the proof of Proposition~\ref{prop-a}. Thus $r_{i,i+1}>0$ for all $1\leq i<m$ and $R(m)\neq0$. Again we give some quivers of indecomposable projective modules for $n=5$, $m=4$ and $r_{ij}=1$ for all $i,j\in Q_0$ with $i\neq j$ in Figure~\ref{fig-n5-2}. 
\begin{figure}[th]
$$\vcenter{\begin{xy}\SelectTips{cm}{}
  \xymatrix@-1.3pc{
&&&1\ar[dll]\ar[d]\ar[drr]\ar[drrr]\\
&2\ar[dl]\ar[d]\ar[dr]&&3\ar[d]\ar[dr]&&4\ar[d]&5\\
3\ar[d]&4\ar[d]&5&4\ar[d]&5&5\\
5&5&&5}
\end{xy}}\qquad
\vcenter{\begin{xy}\SelectTips{cm}{}
  \xymatrix@-1.3pc{
&2\ar[dl]\ar[d]\ar[dr]\ar[drr]\\
P(1)&3\ar[d]\ar[dl]&4\ar[d]&5\\
4\ar[d]&5&5\\5}
\end{xy}}$$
\caption{The projective modules $P(1)$ and $P(2)$ for $n=5$, $m=4$ and $r_{ij}=1$.}
\label{fig-n5-2}
\end{figure}
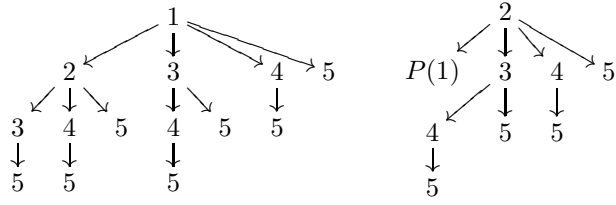

Since Sequence~\eqref{eq-proj-res} is exact for all $i\in Q_0$ with $i\geq m-2$ we get $\pdim(S(i))\in\{0,1,2\}$ in this case. For $i=m-3$ we construct the following exact sequences with $r={r_{m-3,m-2}r_{m-2,m-1}r_{m-1,m}}$:
\begin{align*}
     0
 &\to R(m)^r
 \to R(m-2)^{r_{m-3,m-2}}\oplus P(m)^r\\
 &\to R(m-3)\oplus P(m-2)^{r_{m-3,m-2}}\oplus\bigoplus_{j\in Q_0,j\geq m-1}\Delta(j)^{r_{m-3,j}}\\
 &\to P(m-3)
 \to S(m-3)
 \to 0,\\
     0
 &\to R(m)^r
 \to R(m-2)^{r_{m-3,m-2}}\oplus P(m)^r\\
 &\to R(m-3)\oplus P(m-2)^{r_{m-3,m-2}}
\to P(m-3)\to \Gamma(m-3)\to0.
\end{align*}
 Since $\pdim(\Delta(j))\leq1$ and $R(m)\neq 0$ we get $\pdim(S(m-3))=\pdim(\Gamma(m-3))=3$.  With Figure~\ref{fig-n5-2} Sequence~\eqref{eq-si-aufl} and~\eqref{eq-gi-aufl} are exact again for each $i\in Q_0$ with $i\leq m-4$. Thus in this case $\pdim(\Gamma(i))=\pdim(\Gamma(i+1))+1$ and
\begin{align*}
\pdim(S(i))=\pdim(\Gamma(i+1))+1=\pdim(\Gamma(m-3))+m-3-i=m-i.\end{align*}
Therefore $\gldim(kQ/I'')=m-1.$
\end{proof}
The following proposition completes the proof of Theorem~\ref{thm-main}.

\begin{prop} The algebras occurring in Proposition~\ref{prop-n}, \ref{prop-a}, \ref{prop-x1} and \ref{prop-x2} are strongly quasi-hereditary.
\end{prop}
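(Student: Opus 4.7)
The plan is to use the decomposition $R(i)=\bigoplus_{j<i}P(j)^{r_{ij}}$ and $\Delta(i)=P(i)/R(i)$ already introduced in the proof of Proposition~\ref{prop-a} (these definitions make sense verbatim for the ideal $I$ of Proposition~\ref{prop-n} and for the ideal $I''$ of Proposition~\ref{prop-x2} as well), and to verify both conditions of strong quasi-hereditarity uniformly for $I$, $I'$ and $I''$. The first condition requires exhibiting $R(i)$ as a genuine submodule of $P(i)$; the second amounts to showing that the composition factors of $\rad\Delta(i)$ are all of the form $S(j)$ with $j>i$, which I will establish by decreasing induction on $i$ with base case $\Delta(n)=S(n)$.

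To produce the embedding $R(i)\hookrightarrow P(i)$, I would define, for each arrow $a_{ij,t}$ with $j<i$, the map $\phi_{j,t}\colon P(j)\to P(i)$, $p\cdot e_j\mapsto p\cdot a_{ij,t}$, and check injectivity by contrapositive: if $p$ is nonzero in $P(j)$, any \emph{new} relator appearing in $p\cdot a_{ij,t}$ must be a subpath whose last arrow is $a_{ij,t}$, of length $2$ in the cases $I,I'$ and of length $2$ or $3$ in the case $I''$. A direct inspection of Equation~(\ref{eq-I}) and of the extra generators of $I'$ and $I''$ shows that the rightmost factor of every such generator is of the form $a_{s,s+1,\cdot}$, i.e.\ an arrow going from a smaller to a larger vertex; since $j<i$ the arrow $a_{ij,t}$ is not of this form, so no new relator can appear. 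Hence $\phi_{j,t}$ is injective, and the images for different pairs $(j,t)$ form a direct sum in $P(i)$ because every basis path in $P(i)$ has a unique initial arrow.

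The composition factor condition is then handled by the same uniqueness of first arrow, which yields
\[
\rad P(i)=\bigoplus_{j\ne i,\,t}(kQ/I^{\bullet})\cdot a_{ij,t},\qquad
\rad\Delta(i)=\bigoplus_{j>i,\,t}(kQ/I^{\bullet})\cdot a_{ij,t},
\]
where $I^{\bullet}\in\{I,I',I''\}$. Each summand on the right is a cyclic quotient of $P(j)$; the analogous relator analysis (now picking up the length-$2$ generators $a_{jk,u}a_{ij,t}$ with $k<j$ from $I$, plus whatever extras the bigger ideals contribute) shows that this quotient already factors through $\Delta(j)$. The inductive hypothesis says that $\Delta(j)$ has composition factors $S(l)$ with $l\ge j$, so $\rad\Delta(i)$ has only composition factors $S(l)$ with $l>i$, as required.

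The main obstacle is bookkeeping: one must make sure that none of the extra relations in $I'$ or $I''$ create a spurious kernel in the second step. All the extra generators are products $a_{s+1,s+2,\cdot}a_{s,s+1,\cdot}$ or $a_{m-1,m,\cdot}a_{m-2,m-1,\cdot}a_{m-3,m-2,\cdot}$, i.e.\ strings of strictly ``upward'' arrows along the subquiver $A_m$, so their rightmost factor is always of the form $a_{s,s+1,\cdot}$ and can never be identified with $a_{ij,t}$ for $j<i$. Once this observation is in hand, the argument is uniform across all four propositions, and no separate treatment of $I$, $I'$ and $I''$ is required.
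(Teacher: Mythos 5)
Your proof is correct and takes essentially the same approach as the paper: the paper also sets $R(i)=\bigoplus_{j<i}P(j)^{r_{ij}}$ and $\Delta(i)=P(i)/R(i)$ and checks the two defining conditions, merely asserting (via $\dim_k{\Hom}_{kQ/I}(P(j),\Delta(i))=\delta_{ij}$ for $j\leq i$, ``by the construction of the ideal'') what you verify explicitly through the monomial-ideal, rightmost-arrow analysis and the downward induction. So your argument simply supplies the details the paper leaves implicit.
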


\begin{proof} Define $R(i)$ and $\Delta(i)$ as above. Thus Sequence~\eqref{gl-sqh} is exact and by the construction of the ideal $I$ holds $\dim_k{\Hom}_{kQ/I}(P(j),\Delta(i))=\delta_{ij}$ for all $j\in Q_0$ with $j\leq i$.
\end{proof}

\end{document}